\documentclass[a4paper,oneside]{amsart}

\usepackage{amssymb,amsthm}
\usepackage{hyperref}


\newtheorem{thm}{Theorem}[section]
\newtheorem{pro}[thm]{Proposition}
\newtheorem{lem}[thm]{Lemma}

\theoremstyle{definition}

\newtheorem{rem}[thm]{Remark}

\newcommand{\en}{\mathbb{N}}

\newcommand{\er}{\mathbb{R}}

\newcommand{\ce}{\mathbb{C}}

\newcommand{\me}{\mathrm{e}}

\frenchspacing

\begin{document}

\title[Coarse and uniform embeddability]{On the equivalence between coarse and uniform embeddability of quasi-Banach spaces into a Hilbert space}

\author{Michal Kraus}

\address{Pohang Mathematics Institute, Pohang University of Science and Technology, San 31 Hyoja Dong, Nam-Gu, Pohang 790-784, Republic of Korea}

\email{mkraus@karlin.mff.cuni.cz or mkraus@postech.ac.kr}

\subjclass[2010]{Primary 46B20; Secondary 51F99}

\keywords{Coarse embedding, uniform embedding, quasi-Banach space, Hilbert space}

\begin{abstract}
We give a direct proof of the fact that a quasi-Banach space coarsely embeds into a Hilbert space if and only if it uniformly embeds into a Hilbert space.
\end{abstract}

\thanks{This work was supported by Priority Research Centers Program through the National Research Foundation of Korea (NRF) funded by the Ministry of Education, Science and Technology (Project No. 2012047640), and by the grants GA\v{C}R 201/11/0345 and PHC Barrande 2012-26516YG}


\maketitle

\section{Introduction}

Let $(M,d_M),(N,d_N)$ be metric spaces and let $T:M\to N$ be a mapping. Then $T$ is called a \emph{coarse embedding} if there exist nondecreasing functions $\rho_1,\rho_2:[0,\infty)\to[0,\infty)$ such that $\lim_{t\to\infty}\rho_1(t)=\infty$ and
$$\rho_1(d_M(x,y))\leq d_N(T(x),T(y))\leq\rho_2(d_M(x,y))\ \text{for all }x,y\in M.$$
We say that $T$ is a \emph{uniform embedding} if $T$ is injective and both $T$ and $T^{-1}:T(M)\to M$ are uniformly continuous. If $T$ is both a coarse embedding and a uniform embedding, then we call it a \emph{strong uniform embedding}. We say that $M$ \emph{coarsely embeds} into $N$ if there exists a coarse embedding of $M$ into $N$, and similarly for other types of embeddings. The reader should be warned that what we call a coarse embedding is called a uniform embedding by some authors. We use the term coarse embedding because in the nonlinear geometry of Banach spaces the term uniform embedding already has a well-established meaning as above. Let us mention that all vector spaces in this paper are supposed to be real.

Aharoni, Maurey and Mityagin \cite[Theorem~4.1]{amm} proved that a linear metric space uniformly embeds into a Hilbert space if and only if it is linearly isomorphic to a subspace of $L_0(\mu)$ for some probability space $(\Omega,\mathcal{B},\mu)$ ($L_0(\mu)$ is the space of all (equivalence classes of) measurable functions on $(\Omega,\mathcal{B},\mu)$ with the topology of convergence in probability). Later, Randrianarivony \cite[Theorem~1]{ra} proved by using similar reasoning that a quasi-Banach space coarsely embeds into a Hilbert space if and only if it is linearly isomorphic to a subspace of $L_0(\mu)$ for some probability space $(\Omega,\mathcal{B},\mu)$. Her proof actually provides a strong uniform embedding whenever the condition on the right hand side is satisfied. As a consequence, a quasi-Banach space coarsely embeds into a Hilbert space if and only if it uniformly embeds into a Hilbert space. This is quite surprising, because a coarse embedding, by definition, gives information on large distances, while a uniform embedding gives information on small distances. In summary, we have the following theorem.

\begin{thm}\label{main_thm}
Let $X$ be a quasi-Banach space. The following assertions are equivalent.
\renewcommand{\labelenumi}{\rm(\roman{enumi})}
\begin{enumerate}
\item $X$ coarsely embeds into a Hilbert space.
\item $X$ uniformly embeds into a Hilbert space.
\item $X$ strongly uniformly embeds into a Hilbert space.
\item There exists a probability space $(\Omega,\mathcal{B},\mu)$ such that $X$ is linearly isomorphic to a subspace of $L_0(\mu)$.
\end{enumerate}
\end{thm}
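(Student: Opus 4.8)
The plan is to close the circle of implications, taking advantage of the fact that three of the four equivalences are already contained in the cited results. The implications (iii)$\Rightarrow$(i) and (iii)$\Rightarrow$(ii) are immediate from the definitions, since a strong uniform embedding is by construction simultaneously a coarse embedding and a uniform embedding. For the remaining links I would record (i)$\Rightarrow$(iv) as the nontrivial direction of Randrianarivony's theorem, (ii)$\Rightarrow$(iv) as the nontrivial direction of the Aharoni--Maurey--Mityagin theorem, and (iv)$\Rightarrow$(iii) as the strengthened conclusion of Randrianarivony's construction noted in the introduction. Together these give the chains (iv)$\Rightarrow$(iii)$\Rightarrow$(i)$\Rightarrow$(iv) and (iii)$\Rightarrow$(ii)$\Rightarrow$(iv), so all four assertions are equivalent.

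The more interesting task, and the one matching the \emph{direct} proof advertised in the abstract, is to pass from (i) to (iii) without routing through the structural description (iv). Here I would exploit the homogeneity of the quasi-norm, which is exactly what converts large-scale into small-scale information. Given a coarse embedding $f:X\to H$ with moduli $\rho_1,\rho_2$, the map $x\mapsto f(tx)$ is again a coarse embedding with moduli $\rho_i(t\,\cdot)$, so rescaling probes $f$ at every scale. I would therefore fix a basepoint and form the weighted $\ell_2$-sum
$$g(x)=\bigoplus_{n\in\zet}\sqrt{w_n}\,\bigl(f(2^nx)-f(0)\bigr)\in\Bigl(\bigoplus_{n\in\zet}H\Bigr)_{\ell_2},$$
with weights $w_n>0$ to be chosen, so that
$$\sum_{n\in\zet}w_n\,\rho_1(2^n\|x-y\|)^2\ \le\ \|g(x)-g(y)\|^2\ \le\ \sum_{n\in\zet}w_n\,\rho_2(2^n\|x-y\|)^2.$$
The lower estimate is the easy half: since $\rho_1\to\infty$, a single term of the sum already forces the lower modulus to tend to infinity, while for each fixed separation some term is strictly positive, which yields at once injectivity, uniform continuity of the inverse, and the coarse lower bound.

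The main obstacle is the upper estimate, and it splits into two parts. For large positive $n$ the quantity $\rho_2(2^nt)$ may grow arbitrarily fast, threatening convergence of the sum; this I would tame either by letting $w_n$ decay faster than $\rho_2(2^{n+1})^{-2}$ or, more robustly, by first replacing the kernel $K(x,y)=\|f(x)-f(y)\|^2$ by the bounded conditionally negative definite kernel $1-e^{-K(x,y)}$, using Schoenberg's theorem, before summing. The genuinely delicate point is the behavior as $\|x-y\|\to0$: the terms with $n\to-\infty$ each contribute essentially $\rho_2(0^+)^2$, so the sum tends to $0$ only if the original coarse embedding is already uniformly continuous, that is $\rho_2(0^+)=0$. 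A general coarse embedding need not have this regularity, and no choice of fixed weights can repair a discontinuity of $f$ along the diagonal. Hence the crux of a direct argument is a preliminary regularization lemma: from an arbitrary coarse embedding of a quasi-Banach space into a Hilbert space, produce one that is in addition uniformly continuous. Once such a regularized embedding is available, the scaling sum above delivers a map that is uniformly continuous, has uniformly continuous inverse, and satisfies the coarse bounds, i.e. a strong uniform embedding, establishing (i)$\Rightarrow$(iii) directly.
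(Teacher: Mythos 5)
Your first paragraph does prove the theorem as stated, but only by quoting the Aharoni--Maurey--Mityagin and Randrianarivony results, i.e.\ by routing everything through (iv); this is exactly the argument the paper's introduction already records, and it is not the direct proof of (i)$\Leftrightarrow$(ii)$\Leftrightarrow$(iii) that the paper actually carries out. So the substance of your proposal is the sketch of (i)$\Rightarrow$(iii), and there the argument has a genuine gap: you correctly identify that the whole construction collapses unless one first replaces the given coarse embedding by one that is in addition uniformly continuous, and then you leave that ``preliminary regularization lemma'' unproved. That lemma is not a routine preliminary --- it is the hardest step of the paper's proof (Step~1): one first shows a coarse embedding of a quasi-Banach space may be taken Lipschitz for distances $\geq 1$ (by chopping segments into unit pieces), then raises the negative definite kernel $\|T(x)-T(y)\|^2$ to a power $r$ with $r/p\leq 1/2$ using Schoenberg's theorem and Lemma~\ref{neg_def_pow}, restricts to a $1$-net, and invokes the Wells--Williams extension theorem to extend from the net to all of $X$ with the H\"older modulus preserved. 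Without some such argument (or a substitute), your construction does not start.

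A second, smaller but concrete error: your ``more robust'' fallback of replacing $K(x,y)=\|f(x)-f(y)\|^2$ by the bounded kernel $1-\me^{-K(x,y)}$ before summing is incompatible with the coarse lower bound. Each summand is then at most $w_n$, so with summable weights the resulting map $g$ is bounded, and $\varphi_g$ cannot tend to infinity; truncation buys convergence of the upper estimate at the price of destroying coarseness. The weighted-sum option without truncation can be made to work (choose $w_n$ decaying fast enough against $\rho_2(2^{2np})^2$), and, granting the regularization, your dyadic rescaling sum is a legitimate alternative to the paper's Steps~3--5 (which instead average the kernel by an invariant mean to get a translation-invariant negative definite function $f$ and use the subadditivity of $\sqrt{f}$, Proposition~\ref{rho_control}, to propagate positivity of $\rho_f$ from one scale to all scales). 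Indeed your sum is close in spirit to the Dadarlat--Guentner argument the paper uses for (ii)$\Rightarrow$(i). But as written, the proposal names the crux rather than resolving it, and the one concrete technical device it offers for the upper bound is the one that fails.
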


The purpose of this paper is to give a direct proof of the equivalence of the conditions (i), (ii) and (iii), without passing through the condition (iv). The proof that (i) implies (iii) is a refinement of (a part of) an argument of Johnson and Randrianarivony given in \cite{jr}, where the authors prove that $\ell_p$ for $p>2$ does not coarsely embed into a Hilbert space. The proof that (ii) implies (i) is a simple application of a (slightly improved) characterization of coarse embeddability of metric spaces into a Hilbert space due to Dadarlat and Guentner \cite[Proposition~2.1]{dg}. For the sake of completeness, we give proofs even of some known results which are used in our proof.

\section{Preliminaries}

Let $X$ be a quasi-Banach space. By a theorem of Aoki and Rolewicz (see e.g. \cite[Proposition~H.2]{bl}), there exists an equivalent quasi-norm $\|.\|$ on $X$ and $0<p\leq1$ such that $\|x+y\|^p\leq\|x\|^p+\|y\|^p$ for all $x,y\in X$. In particular, $(x,y)\mapsto\|x-y\|^p$ is an invariant metric on $X$ determining the topology given by the original quasi-norm. All metric properties of the space $X$ will be regarded with respect to this metric. It is easy to see that if $d_1(x,y)=\|x-y\|_1^p$ and $d_2(x,y)=\|x-y\|_2^q$ are two such metrics on $X$, $M$ is a metric space and $T:X\to M$ is a mapping, then $T:(X,d_1)\to M$ is a coarse (uniform) embedding if and only if $T:(X,d_2)\to M$ is. So for our purposes we may pick any such metric on $X$. For a brief overview of quasi-Banach spaces see for example \cite[Appendix~H]{bl}.

Important tools for studying coarse and uniform embeddings into Hilbert spaces are positive and negative definite kernels and functions. Let us recall the definitions and basic facts used in the sequel. For a detailed exposition see for example \cite[Chapter~8]{bl}.

A kernel $K$ on a set $X$ (i.e. a function $K:X\times X\to\ce$ such that $K(y,x)=\overline{K(x,y)}$ for all $x,y\in X$) is called 
\begin{enumerate}\renewcommand{\labelenumi}{\rm(\alph{enumi})}
\item \emph{positive definite} if $\sum_{i,j=1}^nK(x_i,x_j)c_i\overline{c_j}\geq0$ for all $n\in\en$, $x_1,\dots,x_n\in X$ and $c_1,\dots,c_n\in\ce$,
\item \emph{negative definite} if $\sum_{i,j=1}^nK(x_i,x_j)c_i\overline{c_j}\leq0$ for all $n\in\en$, $x_1,\dots,x_n\in X$ and $c_1,\dots,c_n\in\ce$ satisfying $\sum_{i=1}^n c_i=0$. 
\end{enumerate}
If $X$ is an abelian group and $f:X\to\ce$ is a function, then we say that $f$ is \emph{positive (negative) definite} if $(x,y)\mapsto f(x-y)$ is a positive (negative) definite kernel on $X$.

Note that if the kernel $K$ is real-valued, then in order to check the positive or negative definiteness of $K$ it suffices to use only the real scalars. In the sequel, we will work only with real-valued positive (negative) definite kernels and functions.

There is a relation between positive and negative definite kernels as described by the following theorem of Schoenberg (for a proof see e.g. \cite[Proposition 8.4]{bl}).

\begin{thm}\label{neg_pos_relation}
A kernel $N$ on a set $X$ is negative definite if and only if $\me^{-tN}$ is positive definite for every $t>0$.
\end{thm}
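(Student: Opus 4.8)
The plan is to prove both implications, the forward direction (negative definite $\Rightarrow\me^{-tN}$ positive definite for all $t>0$) being substantially the harder one. For the easy direction, suppose $\me^{-tN}$ is positive definite for every $t>0$ and fix points $x_1,\dots,x_n\in X$ together with scalars $c_1,\dots,c_n\in\ce$ satisfying $\sum_i c_i=0$. The key observation is that this constraint forces $\sum_{i,j}c_i\overline{c_j}=|\sum_i c_i|^2=0$, so I may subtract the constant $1$ from each entry of the kernel $\me^{-tN}$ without altering the quadratic form. Applying positive definiteness of $\me^{-tN}$ and rearranging gives $\sum_{i,j}\bigl(1-\me^{-tN(x_i,x_j)}\bigr)c_i\overline{c_j}\le0$; dividing by $t>0$ and letting $t\to0^+$, so that $\bigl(1-\me^{-tN(x_i,x_j)}\bigr)/t\to N(x_i,x_j)$, I obtain $\sum_{i,j}N(x_i,x_j)c_i\overline{c_j}\le0$, which is exactly negative definiteness.

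For the converse, the central device is a lemma converting a negative definite kernel into a positive definite one by anchoring at a base point. Fixing $x_0\in X$, I will set
\[
\phi(x,y)=N(x,x_0)+N(x_0,y)-N(x,y)-N(x_0,x_0)
\]
and show $\phi$ is positive definite (it is Hermitian since $N(x_0,y)=\overline{N(y,x_0)}$). Given $x_1,\dots,x_n$ and arbitrary scalars $c_1,\dots,c_n$, I enlarge the configuration by the point $x_0$ assigned the coefficient $c_0=-\sum_{i=1}^n c_i$, so that $\sum_{i=0}^n c_i=0$. Expanding the negative definiteness inequality $\sum_{i,j=0}^n N(x_i,x_j)c_i\overline{c_j}\le0$ and collecting the terms that involve $c_0$ yields precisely $\sum_{i,j=1}^n\phi(x_i,x_j)c_i\overline{c_j}\ge0$.

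I then rewrite $-N$ in terms of $\phi$, namely
\[
-N(x,y)=\phi(x,y)-N(x,x_0)-N(x_0,y)+N(x_0,x_0),
\]
and exponentiate, using that $N(x_0,x_0)$ is real. This factors $\me^{-tN(x,y)}$ as a product of three positive definite kernels: the rank-one kernel $h(x)\overline{h(y)}$ with $h(x)=\me^{-tN(x,x_0)}$, positive definite because its quadratic form equals $\bigl|\sum_i h(x_i)c_i\bigr|^2$; the positive constant $\me^{tN(x_0,x_0)}$; and the kernel $\me^{t\phi}$. The positive definiteness of $\me^{t\phi}$ will follow by expanding the exponential as a power series and invoking the Schur product theorem, which I expect to be the main technical obstacle and which I will establish separately.

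Finally, I record the Schur product theorem: the pointwise product of two positive definite kernels is positive definite. Representing positive definite kernels as Gram kernels $\langle v_x,v_y\rangle$ and $\langle w_x,w_y\rangle$ in suitable Hilbert spaces, their product equals $\langle v_x\otimes w_x,\,v_y\otimes w_y\rangle$, which is again a Gram kernel. Combined with the elementary facts that nonnegative linear combinations and pointwise limits of positive definite kernels remain positive definite, this shows that each pointwise power $(t\phi)^k$ is positive definite, and hence so is $\me^{t\phi}=\sum_{k=0}^\infty(t\phi)^k/k!$ for $t>0$. Multiplying the three positive definite factors then exhibits $\me^{-tN}$ as positive definite, completing the proof.
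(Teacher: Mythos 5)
Your proof is correct and is the classical Schoenberg argument: the $t\to0^+$ limit for the easy direction, and for the converse the anchored kernel $\phi(x,y)=N(x,x_0)+N(x_0,y)-N(x,y)-N(x_0,x_0)$ combined with the Schur product theorem and the exponential series. The paper does not prove this theorem itself but cites \cite[Proposition~8.4]{bl}, and your argument is essentially the proof given there, so there is nothing to add.
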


The key result for our purposes is the following theorem. Part (i) was probably first proved by Moore, part (ii) is due to Schoenberg. For a proof see for example \cite[Proposition~8.5]{bl}.

\begin{thm}\label{moore-schoe}
Let $X$ be a set.
\renewcommand{\labelenumi}{\rm(\alph{enumi})}
\begin{enumerate}
\item A kernel $K$ on $X$ is positive definite if and only if there exists a Hilbert space $H$ and a mapping $T:X\to H$ such that $$K(x,y)=\left\langle T(x),T(y)\right\rangle\ \text{for all }x,y\in X.$$
\item A real-valued kernel $N$ on $X$ satisfying $N(x,x)=0$ for every $x\in X$ is negative definite if and only if there exists a Hilbert space $H$ and a mapping $T:X\to H$ such that 
$$N(x,y)=\|T(x)-T(y)\|^2\ \text{for all }x,y\in X.$$
\end{enumerate}
\end{thm}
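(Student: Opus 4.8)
The plan is to prove the classical Moore--Schoenberg theorem relating positive/negative definite kernels to embeddings into Hilbert space. I will treat the two parts in order, since part (b) reduces to part (a) via a polarization-type argument.

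\medskip

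\textbf{Part (a).} The backward direction is trivial: if $K(x,y)=\langle T(x),T(y)\rangle$ for some $T:X\to H$, then for any $x_1,\dots,x_n\in X$ and $c_1,\dots,c_n\in\ce$ we have $\sum_{i,j}K(x_i,x_j)c_i\overline{c_j}=\bigl\|\sum_i c_i T(x_i)\bigr\|^2\geq0$, so $K$ is positive definite. For the forward direction, the natural approach is the Gelfand--Naimark--Segal (GNS) / reproducing-kernel construction. First I would form the complex vector space $V$ of all finitely supported functions $X\to\ce$, spanned by the point masses $\delta_x$, and define a sesquilinear form on $V$ by $\langle\delta_x,\delta_y\rangle_0:=K(x,y)$, extended sesquilinearly. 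Positive definiteness of $K$ says exactly that this form is positive semidefinite. The main technical step is then to pass to a genuine inner product space: I would let $\mathcal{N}=\{v\in V:\langle v,v\rangle_0=0\}$, verify using the Cauchy--Schwarz inequality (valid for any positive semidefinite form) that $\mathcal{N}$ is a subspace and that $\langle\cdot,\cdot\rangle_0$ descends to a well-defined inner product on the quotient $V/\mathcal{N}$. Finally I would let $H$ be the completion of $V/\mathcal{N}$ and set $T(x):=[\delta_x]$, the equivalence class of $\delta_x$; then $\langle T(x),T(y)\rangle=K(x,y)$ by construction.

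\medskip

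\textbf{Part (b).} Again the backward direction is a direct check: if $N(x,y)=\|T(x)-T(y)\|^2$, then clearly $N(x,x)=0$, and for $c_1,\dots,c_n$ with $\sum c_i=0$, expanding $\|T(x_i)-T(x_j)\|^2=\|T(x_i)\|^2+\|T(x_j)\|^2-2\langle T(x_i),T(x_j)\rangle$ and using $\sum c_i=0=\sum\overline{c_j}$ kills the first two terms, leaving $-2\bigl\|\sum_i c_i T(x_i)\bigr\|^2\leq0$. For the forward direction, the standard device is to fix a basepoint $x_0\in X$ and define the kernel $$K(x,y):=\tfrac{1}{2}\bigl(N(x,x_0)+N(y,x_0)-N(x,y)\bigr).$$ The key claim is that $K$ is positive definite, whose proof is the heart of the argument: given arbitrary scalars $c_1,\dots,c_n$, I would set $c_0:=-\sum_{i=1}^n c_i$ and apply the negative definiteness of $N$ to the enlarged system $x_0,x_1,\dots,x_n$ with coefficients $c_0,c_1,\dots,c_n$ (which sum to zero). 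Expanding that inequality and using $N(x,x)=0$ and symmetry should reduce precisely to $\sum_{i,j=1}^n K(x_i,x_j)c_i\overline{c_j}\geq0$. Once $K$ is positive definite, part (a) supplies a Hilbert space $H$ and a map $T$ with $K(x,y)=\langle T(x),T(y)\rangle$, and then $\|T(x)-T(y)\|^2=K(x,x)-2K(x,y)+K(y,y)=N(x,y)$ after substituting the definition of $K$ and simplifying.

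\medskip

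The main obstacle is the bookkeeping in part (b): verifying that $K$ as defined is positive definite requires carefully choosing the auxiliary coefficient $c_0$ and tracking how the various $N(x_i,x_j)$ terms recombine. The passage to the quotient in part (a) is routine but must be stated cleanly, and I would take care to note that since we work with real-valued kernels it suffices (as remarked in the preliminaries) to use real scalars, which slightly simplifies the algebra but does not change the structure of the proof.
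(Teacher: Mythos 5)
Your proposal is correct and follows the same classical route as the proof the paper cites (Benyamini--Lindenstrauss, Proposition 8.5): the GNS/reproducing-kernel quotient-and-completion construction for (a), and Schoenberg's basepoint reduction $K(x,y)=\tfrac12(N(x,x_0)+N(y,x_0)-N(x,y))$ for (b). The only cosmetic point is that in the complex setting $\|u-v\|^2=\|u\|^2+\|v\|^2-2\mathrm{Re}\langle u,v\rangle$, so the expansions in (b) should carry a real part, which is harmless here since $N$ and $K$ are real-valued and, as you note, real scalars suffice.
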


The proof of the necessity in both (a) and (b) given in \cite[Proposition~8.5]{bl} actually leads to a complex Hilbert space $H$, but it is easy to see that if the kernel $K$ in (a) is real-valued, then there exists a real Hilbert space $H$ with the desired properties. In (b) we can always find a real Hilbert space.

A simple consequence of Theorem \ref{moore-schoe} is that if $X$ is an abelian group and $f:X\to\er$ is a positive definite function with $f(0)=1$, then $|f(x)|\leq1$ for every $x\in X$. Similarly, if $f:X\to\er$ is a negative definite function with $f(0)=0$, then $f(x)\geq0$ for every $x\in X$.

We will also need the following fact (\cite[p. 186, Examples. (iii)]{bl}).

\begin{lem}\label{neg_def_pow}
Let $N$ be a negative definite kernel on a set $X$ such that $N(x,y)\geq0$ for all $x,y\in X$ and let $0<a<1$. Then $N^a$ is also a negative definite kernel. 
\end{lem}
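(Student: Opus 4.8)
The plan is to reduce the statement, via Schoenberg's theorem (Theorem \ref{neg_pos_relation}), to the elementary observation that a suitable positive integral average of negative definite kernels is again negative definite. The starting point is the classical subordination formula
$$\lambda^a = \frac{1}{C_a}\int_0^\infty \frac{1-\me^{-t\lambda}}{t^{1+a}}\,dt, \qquad \lambda \geq 0,$$
where $C_a = \int_0^\infty \frac{1-\me^{-s}}{s^{1+a}}\,ds$. First I would check that $C_a \in (0,\infty)$: near $s=0$ the integrand behaves like $s^{-a}$ (integrable since $a<1$) and near infinity like $s^{-1-a}$ (integrable since $a>0$); the identity itself then follows by the substitution $s = t\lambda$.

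The crucial step is to verify that for each fixed $t>0$ the kernel $(x,y)\mapsto 1-\me^{-tN(x,y)}$ is negative definite. Fix $x_1,\dots,x_n \in X$ and scalars $c_1,\dots,c_n$ with $\sum_{i=1}^n c_i = 0$. Then
$$\sum_{i,j=1}^n \bigl(1-\me^{-tN(x_i,x_j)}\bigr)c_i\overline{c_j} = \Bigl|\sum_{i=1}^n c_i\Bigr|^2 - \sum_{i,j=1}^n \me^{-tN(x_i,x_j)}c_i\overline{c_j}.$$
The first term vanishes by the constraint $\sum c_i = 0$, while the second is nonnegative because $\me^{-tN}$ is positive definite by Theorem \ref{neg_pos_relation}; hence the whole expression is $\leq 0$.

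Finally I would combine the two ingredients. Since $N(x,y)\geq 0$, the formula gives
$$N(x,y)^a = \frac{1}{C_a}\int_0^\infty \frac{1-\me^{-tN(x,y)}}{t^{1+a}}\,dt,$$
so that for any $x_1,\dots,x_n$ and $c_1,\dots,c_n$ with $\sum c_i = 0$,
$$\sum_{i,j=1}^n N(x_i,x_j)^a\, c_i\overline{c_j} = \frac{1}{C_a}\int_0^\infty \frac{1}{t^{1+a}}\Bigl(\sum_{i,j=1}^n \bigl(1-\me^{-tN(x_i,x_j)}\bigr)c_i\overline{c_j}\Bigr)\,dt.$$
The interchange of the (finite) sum and the integral is immediate, and the inner sum is $\leq 0$ for every $t$ by the previous step, so the integral is $\leq 0$. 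Since $N$ is real-valued and symmetric and $N\geq 0$, the kernel $N^a$ is again real-valued and symmetric, so this computation shows it is negative definite. The one genuinely non-routine point is the integral representation of $\lambda^a$; once it is in hand, everything else is a direct application of Schoenberg's theorem together with the positivity of the integrand.
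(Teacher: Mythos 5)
Your proof is correct. The paper itself states Lemma \ref{neg_def_pow} without proof, citing \cite[p.~186, Examples.~(iii)]{bl}; your argument --- the integral representation $\lambda^a=C_a^{-1}\int_0^\infty(1-\me^{-t\lambda})t^{-1-a}\,dt$ combined with Schoenberg's theorem (Theorem \ref{neg_pos_relation}) --- is precisely the classical proof given in that reference, and all the steps you flag (finiteness of $C_a$, negative definiteness of $1-\me^{-tN}$ for each fixed $t>0$, and the interchange of the finite sum with the integral) check out.
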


Let $X$ be an abelian group with an invariant metric $d$ and let $f:X\to\er$ be a positive definite function with $f(0)=1$. Then Theorem \ref{moore-schoe}(a) yields a Hilbert space $H$ and a mapping $T:X\to H$ such that 
$$f(x-y)=\left\langle T(x),T(y)\right\rangle\ \text{for all }x,y\in X.$$
Then $\|T(x)\|=1$ for every $x\in X$ and 
$$\|T(x)-T(y)\|^2=2(1-f(x-y))\ \text{for all }x,y\in X.$$
This reasoning leads easily to the following result (\cite[Proposition 3.2]{amm}). First, denote
\begin{equation}\label{def_g_f}
g_f(t)=\inf\{1-f(x):d(x,0)\geq t\},\ t>0.
\end{equation}

\begin{pro}\label{amm_group_emb_into_sphere}
Let $X$ be an abelian group with an invariant metric. If there exists a continuous positive definite function $f:X\to\er$ such that $f(0)=1$ and $g_f(t)>0$ for every $t>0$, then $X$ uniformly embeds into the unit sphere of a Hilbert space. 
\end{pro}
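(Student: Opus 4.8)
The plan is to use the map $T$ already produced by the discussion immediately preceding the statement and to verify by hand that it is a uniform embedding into the unit sphere. Applying Theorem \ref{moore-schoe}(a) to the real-valued positive definite kernel $(x,y)\mapsto f(x-y)$ gives a real Hilbert space $H$ and a map $T\colon X\to H$ with $f(x-y)=\langle T(x),T(y)\rangle$ for all $x,y\in X$. Since $f(0)=1$, we get $\|T(x)\|^2=f(0)=1$, so $T$ takes values in the unit sphere, and expanding the inner product yields the single identity on which everything rests,
$$\|T(x)-T(y)\|^2=2\bigl(1-f(x-y)\bigr)\quad\text{for all }x,y\in X.$$
Injectivity is then immediate: if $x\neq y$, then by invariance of the metric $d(x-y,0)=d(x,y)=:t>0$, so $d(x-y,0)\geq t$ and the hypothesis $g_f(t)>0$ force $1-f(x-y)\geq g_f(t)>0$, whence $\|T(x)-T(y)\|>0$ and $T(x)\neq T(y)$.

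For uniform continuity of $T$, I would exploit the invariance of the metric, which reduces the modulus of continuity of $T$ to the behaviour of $f$ near $0$. Given $\varepsilon>0$, continuity of $f$ at $0$ together with $f(0)=1$ produces $\delta>0$ such that $d(z,0)<\delta$ implies $1-f(z)<\varepsilon^2/2$; then any $x,y$ with $d(x,y)<\delta$ satisfy $d(x-y,0)<\delta$, and the identity gives $\|T(x)-T(y)\|^2=2(1-f(x-y))<\varepsilon^2$. For uniform continuity of $T^{-1}\colon T(X)\to X$, the function $g_f$ does the essential work: given $\varepsilon>0$, put $\delta=\sqrt{2\,g_f(\varepsilon)}$, which is positive by hypothesis. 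If $d(x,y)\geq\varepsilon$, then $d(x-y,0)\geq\varepsilon$, so $1-f(x-y)\geq g_f(\varepsilon)$ and the identity yields $\|T(x)-T(y)\|\geq\delta$; contrapositively, $\|T(x)-T(y)\|<\delta$ forces $d(x,y)<\varepsilon$. Combining the three steps shows $T$ is a uniform embedding of $X$ into the unit sphere of $H$.

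The argument is a sequence of direct verifications, so I do not expect a serious obstacle. The one point deserving care is that both the injectivity and the lower estimate underlying uniform continuity of $T^{-1}$ rely precisely on the positivity $g_f(t)>0$, while uniform — as opposed to merely pointwise — continuity of $T$ itself is exactly what invariance of the metric buys from continuity of $f$ at the single point $0$.
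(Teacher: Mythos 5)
Your proof is correct and follows exactly the route the paper indicates: the paper's "proof" is precisely the discussion preceding the statement, which produces the same map $T$ via Theorem \ref{moore-schoe}(a) and the identity $\|T(x)-T(y)\|^2=2(1-f(x-y))$, leaving the routine verifications (which you carry out correctly) to the reader.
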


The hypothesis of Proposition \ref{amm_group_emb_into_sphere} actually characterizes metric abelian groups uniformly embeddable into a Hilbert space, see \cite[Theorem~3.1]{amm}.

Now, let $H$ be a Hilbert space. By Theorem \ref{moore-schoe}(b), the function $\|.\|^2$ on $H$ is negative definite, and therefore, by Theorem \ref{neg_pos_relation}, the function $f(x)=\me^{-\|x\|^2},x\in H$, is positive definite. It is clear that $f$ satisfies the hypothesis of Proposition \ref{amm_group_emb_into_sphere}, and therefore $H$ uniformly embeds into the unit sphere of a Hilbert space. As a consequence, we obtain the following fact (\cite[paragraph after Corollary 3.3]{amm}).

\begin{pro}\label{red_sphere}
If a metric space uniformly embeds into a Hilbert space, then it uniformly embeds into the unit sphere of a Hilbert space.
\end{pro}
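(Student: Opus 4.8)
The plan is to reduce the statement to the fact---already established in the paragraph immediately preceding the proposition---that every Hilbert space uniformly embeds into the unit sphere of a Hilbert space, and then to invoke the elementary observation that a composition of uniform embeddings is again a uniform embedding.

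More precisely, suppose $M$ is a metric space that uniformly embeds into a Hilbert space $H$, and let $S:M\to H$ be such an embedding. The discussion preceding the proposition supplies a uniform embedding $U:H\to\mathbb{S}$, where $\mathbb{S}$ denotes the unit sphere of some Hilbert space $H'$. I would then consider the composition $U\circ S:M\to\mathbb{S}$ and show that it is the desired uniform embedding.

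To verify this, first note that $U\circ S$ is injective, being a composition of injective maps, and uniformly continuous, being a composition of uniformly continuous maps. For the inverse, observe that $(U\circ S)^{-1}=S^{-1}\circ U^{-1}$ on the image $U(S(M))\subseteq\mathbb{S}$: since $U^{-1}$ is uniformly continuous on $U(H)$ and carries $U(S(M))$ into $S(M)$, while $S^{-1}$ is uniformly continuous on $S(M)$, the composition $S^{-1}\circ U^{-1}$ is uniformly continuous on $U(S(M))$. Hence $U\circ S$ is a uniform embedding of $M$ into the unit sphere $\mathbb{S}$, as required.

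Since the substantive content---embedding a Hilbert space into its own unit sphere---has already been handled via Proposition \ref{amm_group_emb_into_sphere} applied to $f(x)=\me^{-\|x\|^2}$, no genuine obstacle remains. The only point that calls for a little care is the bookkeeping for the inverse map, namely checking that $U^{-1}$ restricted to the image of $S$ actually lands in $S(M)$, so that $S^{-1}$ may legitimately be applied to it.
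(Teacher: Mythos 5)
Your proposal is correct and follows exactly the route the paper intends: the paragraph preceding the proposition establishes (via Proposition \ref{amm_group_emb_into_sphere} applied to $f(x)=\me^{-\|x\|^2}$) that a Hilbert space uniformly embeds into the unit sphere of a Hilbert space, and the paper deduces the proposition "as a consequence," i.e.\ by the same composition argument you spell out. You merely make explicit the routine verification that a composition of uniform embeddings is a uniform embedding, which the paper leaves implicit.
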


Finally, let us introduce two moduli which will be useful. Let $(M,d_M),(N,d_N)$ be metric spaces and let $T:M\to N$ be a mapping. For $t>0$ define
\begin{equation}\label{low_mod}
\varphi_T(t)=\inf\{d_N(T(x),T(y)):d_M(x,y)\geq t\}
\end{equation}
and
\begin{equation}\label{up_mod}
\omega_T(t)=\sup\{d_N(T(x),T(y)):d_M(x,y)\leq t\}.
\end{equation}
Then the functions $\varphi_T$ and $\omega_T$ are nondecreasing and
$$\varphi_T(d_M(x,y))\leq d_N(T(x),T(y))\leq\omega_T(d_M(x,y))\ \text{for all }x,y\in M,x\neq y.$$
It is easy to see that $T$ is a coarse embedding if and only if $\lim_{t\to\infty}\varphi_T(t)=\infty$ and $\omega_T(t)<\infty$ for every $t>0$, and $T$ is a uniform embedding if and only if $\varphi_T(t)>0$ for every $t>0$ and $\lim_{t\to0}\omega_T(t)=0$. Some authors use this as a definition of a coarse and uniform embedding.

\section{Proofs}

Let $X$ be a quasi-Banach space. We will suppose from now on that the norm on $X$ satisfies $\|x+y\|^p\leq\|x\|^p+\|y\|^p$ for all $x,y\in X$, where $0<p\leq1$, and we will work with the metric $(x,y)\mapsto\|x-y\|^p$ on $X$.

Let us first prove that (i) implies (iii) in Theorem \ref{main_thm}. We will use the following property of negative definite functions (Proposition \ref{rho_control}).

\begin{lem}\label{neg_def_growth}
Let $f:X\to\er$ be a negative definite function with $f(0)=0$. Then for every $x\in X$ and $n\in\en$ we have $$f(nx)\leq n^2f(x).$$
\end{lem}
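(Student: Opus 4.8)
The plan is to realize $f$ geometrically as a squared Hilbert-space distance and then reduce the desired inequality to the triangle inequality applied along the arithmetic progression $0,x,2x,\dots,nx$. Since $f$ is negative definite with $f(0)=0$, Theorem~\ref{moore-schoe}(b) applied to the negative definite kernel $(u,v)\mapsto f(u-v)$ provides a Hilbert space $H$ and a mapping $T:X\to H$ such that
$$f(u-v)=\|T(u)-T(v)\|^2\ \text{for all }u,v\in X.$$
This is the only structural input I expect to need; everything else is elementary manipulation.

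Next I would fix $x\in X$ and $n\in\en$ and abbreviate $a_k=T(kx)$ for $k=0,1,\dots,n$. Two observations drive the argument. First, taking $u=nx$, $v=0$ in the identity above gives $f(nx)=\|a_n-a_0\|^2$. Second, because the kernel depends only on the difference of its arguments, every consecutive increment along the progression has the same length: for each $k$,
$$\|a_{k+1}-a_k\|^2=\|T((k+1)x)-T(kx)\|^2=f\big((k+1)x-kx\big)=f(x).$$
Thus $\|a_{k+1}-a_k\|=\sqrt{f(x)}$ for every $k$, where the square root makes sense since $f(x)\geq0$ (as recorded after Theorem~\ref{moore-schoe} for negative definite $f$ with $f(0)=0$).

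It then remains to telescope. Writing $a_n-a_0=\sum_{k=0}^{n-1}(a_{k+1}-a_k)$ and using the triangle inequality,
$$\sqrt{f(nx)}=\|a_n-a_0\|\leq\sum_{k=0}^{n-1}\|a_{k+1}-a_k\|=n\sqrt{f(x)},$$
and squaring yields $f(nx)\leq n^2f(x)$, as claimed. I do not anticipate any genuine obstacle here: the substantive step is the passage to the Hilbert space via Theorem~\ref{moore-schoe}(b), after which the inequality is immediate. The only minor points requiring care are the nonnegativity of $f(x)$ (and $f(nx)$) needed before extracting square roots, and the fact that the identity $f(nx)=\|a_n-a_0\|^2$ holds without any normalization of $T$ because the right-hand side of Schoenberg's representation depends only on differences.
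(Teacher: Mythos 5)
Your proof is correct and follows essentially the same route as the paper: both realize $f(u-v)=\|T(u)-T(v)\|^2$ via Theorem~\ref{moore-schoe}(b) and then apply the triangle inequality, the only cosmetic difference being that you telescope directly along $0,x,\dots,nx$ while the paper first proves the subadditivity $\sqrt{f(x+y)}\leq\sqrt{f(x)}+\sqrt{f(y)}$ and iterates it.
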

\begin{proof}
By Theorem \ref{moore-schoe}(b), there exists a Hilbert space $H$ and a mapping $T:X\to H$ such that $f(x-y)=\|T(x)-T(y)\|^2$ for all $x,y\in X$. Let $x,y\in X$. Then
\begin{align*}
\sqrt{f(x+y)}&=\|T(x)-T(-y)\|\leq\|T(x)-T(0)\|+\|T(0)-T(-y)\|\\
&=\sqrt{f(x)}+\sqrt{f(y)}
\end{align*}
Hence for every $x\in X$ and $n\in\en$ we have $\sqrt{f(nx)}\leq n\sqrt{f(x)}$.
\end{proof}

If $f:X\to\er$ is a negative definite function with $f(0)=0$, we define 
$$\rho_f(t)=\inf\{f(x):\|x\|^p\geq t\},\ t>0.$$
Note that $\rho_f(t)\geq0$ for every $t>0$ since $f(x)\geq0$ for every $x\in X$.

\begin{pro}\label{rho_control}
Let $f:X\to\er$ be a negative definite function with $f(0)=0$. If $\rho_f(t)>0$ for some $t>0$, then $\rho_f(t)>0$ for every $t>0$.
\end{pro}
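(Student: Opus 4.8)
The plan is to exploit the obvious monotonicity of $\rho_f$ together with the quadratic growth estimate of Lemma \ref{neg_def_growth}. First I would record that $\rho_f$ is nondecreasing: if $0<s\leq t$, then $\{x:\|x\|^p\geq t\}\subseteq\{x:\|x\|^p\geq s\}$, so the infimum defining $\rho_f(t)$ is taken over a smaller set and hence $\rho_f(s)\leq\rho_f(t)$. Consequently, once we know $\rho_f(t_0)>0$ for some $t_0>0$, positivity of $\rho_f(t)$ for all $t\geq t_0$ is automatic, and the entire content of the statement is to establish positivity on the range $0<s<t_0$, that is, for small arguments.

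The key idea is to dilate a point of small norm into the region where $\rho_f$ is already known to be positive, using a dilation factor that can be chosen \emph{uniformly} in the point. Fix $t_0$ with $\rho_f(t_0)>0$ and let $0<s<t_0$. Since $p\leq1$ we have $1/p\geq1$, so I would pick an integer $n\in\en$ with $n\geq(t_0/s)^{1/p}$, which gives $n^p\geq t_0/s$. Then for every $x\in X$ with $\|x\|^p\geq s$ we obtain $\|nx\|^p=n^p\|x\|^p\geq(t_0/s)\cdot s=t_0$, and therefore $f(nx)\geq\rho_f(t_0)$ directly from the definition of $\rho_f$.

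Now Lemma \ref{neg_def_growth} supplies $f(nx)\leq n^2f(x)$, whence $f(x)\geq f(nx)/n^2\geq\rho_f(t_0)/n^2$. Because this lower bound is the same for every admissible $x$, taking the infimum over all $x$ with $\|x\|^p\geq s$ yields $\rho_f(s)\geq\rho_f(t_0)/n^2>0$, which is exactly what we want. The one point demanding care --- and the reason the whole argument succeeds --- is the uniformity of the scaling: the homogeneity $\|nx\|^p=n^p\|x\|^p$ lets a single $n$ push \emph{all} points with $\|x\|^p\geq s$ past the threshold $t_0$ at once. Without this uniformity one would be left with an $x$-dependent scaling factor, and the resulting bound could degenerate to $0$ in the infimum; so I expect the selection of a single $n$ working simultaneously for all such $x$ to be the crux of the proof, even though it is elementary here.
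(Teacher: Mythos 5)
Your proof is correct and rests on exactly the same mechanism as the paper's: the quadratic growth bound $f(nx)\leq n^2f(x)$ from Lemma \ref{neg_def_growth} combined with the homogeneity $\|nx\|^p=n^p\|x\|^p$ to transfer information between thresholds. The only difference is that you argue the implication directly (positivity at $t_0$ propagates down to smaller $s$), whereas the paper argues the contrapositive (vanishing at $t$ propagates up to larger $s$); this is a purely cosmetic distinction.
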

\begin{proof}
Suppose that there exists $t>0$ such that $\rho_f(t)=0$ and let $s>t$. Let $\varepsilon>0$. Then there exists $x\in X$ such that $\|x\|^p\geq t$ and $f(x)<\varepsilon$. Let $n\in\en$ be such that $\|(n-1)x\|^p<s\leq\|nx\|^p$. Then $n<2\left(\frac{s}{t}\right)^\frac{1}{p}$. Hence, by Lemma \ref{neg_def_growth},
$$\rho_f(s)\leq f(nx)\leq n^2f(x)<4\left(\frac{s}{t}\right)^\frac{2}{p}\varepsilon.$$
Since $\varepsilon>0$ was arbitrary, we have $\rho_f(s)=0$. Since $\rho_f$ is nondecreasing, we see that $\rho_f(t)=0$ for every $t>0$. 
\end{proof}

\begin{rem}\label{amm_ineq}
By a slight modification of the proof of Proposition \ref{rho_control} (replacing $\varepsilon$ by $\rho_f(t)+\varepsilon$), we can actually prove that if $f:X\to\er$ is a negative definite function with $f(0)=0$ and $0<t<s$, then 
$$\rho_f(s)\leq\frac{4\rho_f(t)}{t^\frac{2}{p}}s^\frac{2}{p}.$$
If $f:X\to\er$ is a positive definite function with $f(0)=1$, then $1-f$ is negative definite and $(1-f)(0)=0$. Since $g_f=\rho_{1-f}$ (recall that $g_f$ was defined in \eqref{def_g_f}), we obtain, for $0<t<s$,
\begin{equation}\label{amm_est}
g_f(s)\leq\frac{4g_f(t)}{t^\frac{2}{p}}s^\frac{2}{p}.
\end{equation}
This was proved in \cite[Corollary~4.9]{amm} in the case when $X$ is a normed linear space (with $p=1$).
\end{rem}

\begin{proof}[Proof of Theorem \ref{main_thm}, (i)$\Rightarrow$(iii)]
Let $T:X\to H$ be a coarse embedding, where $H$ is a Hilbert space, and let $\rho_1,\rho_2:[0,\infty)\to[0,\infty)$ be nondecreasing functions satisfying $\lim_{t\to\infty}\rho_1(t)=\infty$ and 
$$\rho_1(\|x-y\|^p)\leq\|T(x)-T(y)\|\leq\rho_2(\|x-y\|^p)\ \text{for all }x,y\in X.$$ 
We proceed in five steps. The first three steps are essentially Steps 0, 1, 2 and a part of Step 3 from \cite{jr}, which extend to the case of quasi-Banach spaces (cf. also \cite[Proposition 2]{ra}).

\textbf{Step 1:} In the first step we show that we may assume without loss of generality that $\rho_2(t)=t^a$ for some $a>0$.

First, we may assume that 
\begin{equation}\label{coa_Lip_red}
\|T(x)-T(y)\|\leq\|x-y\|\ \text{if }\|x-y\|^p\geq1.
\end{equation}
Indeed, let $\|x-y\|^p\geq1$, hence $\|x-y\|\geq1$. Let $n\in\en$ satisfy $n-1<\|x-y\|\leq n$. Then $n<2\|x-y\|$. We may find $x=x_0,x_1,\dots,x_n=y$ in X such that $\|x_i-x_{i-1}\|\leq1$, hence $\|x_i-x_{i-1}\|^p\leq1$, for $i=1,\dots,n$ (set $x_i=x+i\frac{y-x}{n},i=0,\dots,n$). Then 
\begin{align*}
\|T(x)-T(y)\|&=\left\|\sum_{i=1}^n\left(T(x_i)-T(x_{i-1})\right)\right\|\leq\sum_{i=1}^n\|T(x_i)-T(x_{i-1})\|\\
&\leq\sum_{i=1}^n\rho_2(\|x_i-x_{i-1}\|^p)\leq n\rho_2(1)<2\rho_2(1)\|x-y\|.
\end{align*}
So, by rescaling, we may indeed assume that \eqref{coa_Lip_red} holds.

Now, $(x,y)\mapsto\|T(x)-T(y)\|^2$ is a negative definite kernel on $X$ by Theorem~\ref{moore-schoe}(b). Let $0<r\leq1$. By Lemma~\ref{neg_def_pow}, the kernel $N(x,y)=\|T(x)-T(y)\|^{2r}$ on $X$ is also negative definite and satisfies $N(x,x)=0$ for every $x\in X$. By Theorem~\ref{moore-schoe}(b), there exists a Hilbert space $H_r$ and a mapping $T_r:X\to H_r$ such that $N(x,y)=\|T_r(x)-T_r(y)\|^2$ for all $x,y\in X$. Hence, by \eqref{coa_Lip_red}, $$\|T_r(x)-T_r(y)\|=\|T(x)-T(y)\|^r\leq(\|x-y\|^p)^\frac{r}{p}\ \text{if }\|x-y\|^p\geq1.$$

Let $\mathcal{N}$ be a 1-net in $X$ (i.e. a maximal subset of $X$ such that all pairs of its distinct points have distance at least 1). Let $\frac{r}{p}\leq\frac{1}{2}$. By \cite[last statement of Theorem~19.1]{ww}, the restriction of $T_r$ to the set $\mathcal{N}$ can be extended to a mapping $\widetilde{T_r}:X\to H_r$ satisfying 
$$\|\widetilde{T_r}(x)-\widetilde{T_r}(y)\|\leq(\|x-y\|^p)^\frac{r}{p}\ \text{for all }x,y\in X.$$ 
It is easy to see that $\varphi_{\widetilde{T_r}}(t)\to\infty$ as $t\to\infty$ ($\varphi_{\widetilde{T_r}}$ was defined in \eqref{low_mod}), and therefore $\widetilde{T_r}$ is a coarse embedding.

So from now on we will assume that the coarse embedding $T:X\to H$ satisfies
$$\rho_1(\|x-y\|^p)\leq\|T(x)-T(y)\|\leq(\|x-y\|^p)^a\ \text{for all }x,y\in X,$$
where $\rho_1:[0,\infty)\to[0,\infty)$ is a nondecreasing function satisfying $\lim_{t\to\infty}\rho_1(t)=\infty$ and $a>0$.

\textbf{Step 2:} Define $N(x,y)=\|T(x)-T(y)\|^2,x,y\in X$. By Theorem~\ref{moore-schoe}(b), N is a negative definite kernel on $X$. Define $\varphi(t)=(\rho_1(t))^2,t\geq0$. Then 
$$\varphi(\|x-y\|^p)\leq N(x,y)\leq(\|x-y\|^p)^{2a}\ \text{for all }x,y\in X,$$
and $\varphi:[0,\infty)\to[0,\infty)$ is nondecreasing and $\lim_{t\to\infty}\varphi(t)=\infty$.

\textbf{Step 3:} In this step we obtain a continuous negative definite function $f:X\to\er$ such that
$$\varphi(\|x\|^p)\leq f(x)\leq(\|x\|^p)^{2a}\ \text{for every }x\in X.$$
(In particular $f(0)=0$.)

Let $M:\ell_\infty(X)\to\er$ be an invariant mean, i.e. $M$ is linear and
\begin{enumerate}
\item $M(1)=1$.
\item $M(g)\geq0$ for every $g\geq0$.
\item $M(g_x)=M(g)$ for all $g\in\ell_\infty(X)$ and $x\in X$, where $g_x(y)=g(y+x),y\in X$.
\end{enumerate}
The existence of such a functional is ensured for example by \cite[Theorem C.1]{bl}. If $x,y\in X$, define $N_{x,y}(z)=N(z+x,z+y),z\in X$. Let
$$f(x)=M(N_{x,0}),\ x\in X.$$
Let us show that $f$ is the desired function on $X$.

First, $f$ is well-defined, since $|N_{x,0}(y)|=|N(y+x,y)|\leq(\|x\|^p)^{2a}$ for every $y\in X$ and therefore $N_{x,0}\in\ell_\infty(X)$.

To show that $f$ is negative definite, let us first show that $(x,y)\mapsto f(x-y)$ is indeed a kernel, i.e. $f(-x)=f(x)$ for every $x\in X$. Let $x\in X$. Then $N_{-x,0}=N_{0,-x}$ and by the translation invariance of $M$ (condition (3)) we have
$$f(-x)=M(N_{-x,0})=M(N_{0,-x})=M(N_{x,0})=f(x).$$
Now, let $x_1,\dots,x_n\in X$ and let $c_1,\dots,c_n\in\er$ satisfy $\sum_{i=1}^nc_i=0$. Then
\begin{align*}
\sum_{i,j=1}^nf(x_i-x_j)c_ic_j&=\sum_{i,j=1}^nM(N_{x_i-x_j,0})c_ic_j=\sum_{i,j=1}^nM(N_{x_i,x_j})c_ic_j\\
&=M\left(\sum_{i,j=1}^nN_{x_i,x_j}c_ic_j\right)\leq M(0)=0,
\end{align*}
where the second equality follows from the translation invariance of $M$ and the inequality from the positivity of $M$ (condition (2)) and the negative definiteness of $N$.

Let us now show that $f$ is continuous. Let $x,y\in X$. First, if $z\in X$, then
\begin{align*}
|N_{x,0}(z)-N_{y,0}(z)|&=|N(z+x,z)-N(z+y,z)|\\
&=\left|\|T(z+x)-T(z)\|^2-\|T(z+y)-T(z)\|^2\right|\\
&=\left(\|T(z+x)-T(z)\|+\|T(z+y)-T(z)\|\right)\\
&\quad\cdot\left|\|T(z+x)-T(z)\|-\|T(z+y)-T(z)\|\right|\\
&\leq\left(\|T(z+x)-T(z)\|+\|T(z+y)-T(z)\|\right)\\
&\quad\cdot\|T(z+x)-T(z+y)\|\\
&\leq\left((\|x\|^p)^a+(\|y\|^p)^a\right)(\|x-y\|^p)^a
\end{align*}
Hence, by the positivity of $M$ and the fact that $M(1)=1$,
\begin{align*}
|f(x)-f(y)|&=|M(N_{x,0})-M(N_{y,0})|=|M(N_{x,0}-N_{y,0})|\\
&\leq M(|N_{x,0}-N_{y,0}|)\\
&\leq M\left(\left((\|x\|^p)^a+(\|y\|^p)^a\right)(\|x-y\|^p)^a\right)\\
&=\left((\|x\|^p)^a+(\|y\|^p)^a\right)(\|x-y\|^p)^a,
\end{align*}
and therefore $f$ is continuous.

Finally, let $x\in X$. Since
$$\varphi(\|x\|^p)\leq N_{x,0}(y)\leq(\|x\|^p)^{2a}$$
for every $y\in X$, we have
$$\varphi(\|x\|^p)\leq M(N_{x,0})\leq(\|x\|^p)^{2a}$$
and therefore
$$\varphi(\|x\|^p)\leq f(x)\leq(\|x\|^p)^{2a}.$$

So now we have a continuous negative definite function $f:X\to\er$ such that
$$\varphi(\|x\|^p)\leq f(x)\leq(\|x\|^p)^{2a}\ \text{for every }x\in X,$$
where $\varphi:[0,\infty)\to[0,\infty)$ is a nondecreasing function satisfying $\lim_{t\to\infty}\varphi(t)=\infty$ and $a>0$.

\textbf{Step 4:} We may assume that $\varphi(t)>0$ for every $t>0$. Indeed, since $\rho_f(t)\geq\varphi(t)$ for every $t>0$ and $\varphi(t)\to\infty$ as $t\to\infty$, we have $\rho_f(t)>0$ for some $t>0$. By Proposition \ref{rho_control}, $\rho_f(t)>0$ for every $t>0$. So we may set $\varphi(t)=\rho_f(t)$ for $t>0$.

\textbf{Step 5:} By Theorem~\ref{moore-schoe}(b), there exists a Hilbert space $H'$ and a mapping $S:X\to H'$ such that $f(x-y)=\|S(x)-S(y)\|^2$ for all $x,y\in X$. Hence
$$\sqrt{\varphi(\|x-y\|^p)}\leq\|S(x)-S(y)\|\leq(\|x-y\|^p)^a\ \text{for all }x,y\in X.$$
Since $\varphi(t)>0$ for every $t>0$ and $\lim_{t\to\infty}\varphi(t)=\infty$, we see that $S$ is a strong uniform embedding.
\end{proof}

\begin{rem}
If we want to prove that (i) implies (ii) in Theorem~\ref{main_thm} and do not mind that the resulting uniform embedding is not strong uniform, we may replace Steps 4 and 5 in the above proof by the following.

By Theorem \ref{neg_pos_relation}, the function $h=\me^{-f}$ is continuous positive definite, and satisfies $h(0)=1$ and 
$$h(x)\leq\me^{-\varphi(\|x\|^p)}\ \text{for every }x\in X.$$
Since $g_h(t)\geq1-\me^{-\varphi(t)}$ for every $t>0$ ($g_h$ was defined in \eqref{def_g_f}) and $\varphi(t)\to\infty$ as $t\to\infty$, we have $g_h(t)>0$ for some $t>0$. Since $1-h$ is negative definite, $(1-h)(0)=0$ and $\rho_{1-h}=g_h$, it follows from Proposition~\ref{rho_control} that $g_h(t)>0$ for every $t>0$ (this of course follows also from \eqref{amm_est}). Hence, by Proposition~\ref{amm_group_emb_into_sphere}, $X$ uniformly embeds into a Hilbert space.
\end{rem}

It is clear that (iii) implies (ii) in Theorem \ref{main_thm}. Let us turn to the proof that (ii) implies (i). We will need the following result of Dadarlat and Guentner essentially contained in the proof of \cite[Proposition~2.1]{dg} (cf. also \cite[Theorem 3]{no}).

\begin{pro}\label{dg_prop_2.1}
Let $M$ be a metric space. Suppose that there exists a $\delta>0$ such that for every $R>0$ and $\varepsilon>0$ there exists a Hilbert space H and a mapping $T:M\to S_H$ (where $S_H$ stands for the unit sphere of $H$) satisfying $\omega_T(R)\leq\varepsilon$ and $\lim_{t\to\infty}\varphi_T(t)\geq\delta$. (Recall that $\varphi_T$ and $\omega_T$ were defined in \eqref{low_mod} and \eqref{up_mod} respectively.) Then $M$ coarsely embeds into a Hilbert space.
\end{pro}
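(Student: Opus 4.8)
The plan is to exploit the fact that each hypothesized map lands in a unit sphere and hence has image of diameter at most $2$: a single such map can never be a coarse embedding, because $\|T(x)-T(y)\|\le2$ forces $\varphi_T$ to be bounded, whereas a coarse embedding requires $\varphi_T(t)\to\infty$. The remedy is to run the hypothesis simultaneously at a sequence of scales tending to infinity and to assemble the resulting maps into a single map into an $\ell_2$-direct sum of Hilbert spaces, choosing the scale parameters so that the contributions near the diagonal are summable while arbitrarily many of them become active at large distances.

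Concretely, first I would fix a base point $x_0\in M$ and choose sequences $R_n\uparrow\infty$ (say $R_n=n$) and $\varepsilon_n\downarrow0$ with $\sum_n\varepsilon_n^2<\infty$ (say $\varepsilon_n=2^{-n}$). Applying the hypothesis with $R=R_n$ and $\varepsilon=\varepsilon_n$ yields Hilbert spaces $H_n$ and maps $T_n:M\to S_{H_n}$ with $\omega_{T_n}(R_n)\le\varepsilon_n$ and $\lim_{t\to\infty}\varphi_{T_n}(t)\ge\delta$. Setting $N_n(x,y)=\|T_n(x)-T_n(y)\|^2$, I would then define $S:M\to\bigoplus_n H_n$ (the Hilbert-space direct sum) by $S(x)=(T_n(x)-T_n(x_0))_n$, so that $\|S(x)-S(y)\|^2=\sum_n N_n(x,y)$. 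Since each $T_n$ takes values in the unit sphere we have $0\le N_n\le4$, and for each fixed pair $x,y$ all but finitely many indices satisfy $R_n\ge d(x,y)$, whence $N_n(x,y)\le\omega_{T_n}(R_n)^2\le\varepsilon_n^2$; thus the series converges and $S$ is well defined. (Equivalently, one could observe that $N=\sum_n N_n$ is a negative definite kernel vanishing on the diagonal and invoke Theorem~\ref{moore-schoe}(b).)

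It then remains to estimate the two moduli of $S$. For the upper modulus, if $d(x,y)\le t$ then splitting the sum at the threshold $R_n=t$ gives $\|S(x)-S(y)\|^2\le4\,|\{n:R_n<t\}|+\sum_n\varepsilon_n^2$, which is finite; hence $\omega_S(t)<\infty$ for every $t$. For the lower modulus, fix $m\in\en$. For each $k\le m$ the condition $\lim_{t\to\infty}\varphi_{T_k}(t)\ge\delta$ provides a $t_k$ with $\|T_k(x)-T_k(y)\|\ge\delta/2$ whenever $d(x,y)\ge t_k$; so once $d(x,y)\ge\max_{k\le m}t_k$ we get $\|S(x)-S(y)\|^2\ge\sum_{k=1}^m N_k(x,y)\ge m\delta^2/4$. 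This shows that $\inf\{\|S(x)-S(y)\|^2:d(x,y)\ge t\}\to\infty$, i.e. $\varphi_S(t)\to\infty$. By the modulus characterization of coarse embeddings recorded after \eqref{up_mod}, $S$ is a coarse embedding.

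The main obstacle is conceptual rather than technical: it is the boundedness of sphere-valued maps, which rules out any single map and forces the multi-scale superposition. The delicate point is to balance the two competing requirements---local summability of the near-diagonal errors (guaranteeing a finite upper modulus) against the activation of more and more terms at large distances (forcing the lower modulus to infinity)---and this is exactly what the choices $R_n\to\infty$ together with $\sum_n\varepsilon_n^2<\infty$ achieve, the uniform constant $\delta$ being essential so that each newly activated term contributes a fixed positive amount.
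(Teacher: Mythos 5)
Your proposal is correct and follows essentially the same route as the paper: apply the hypothesis along a sequence of scales $R_n\to\infty$, $\varepsilon_n=2^{-n}$, form the $\ell_2$-sum map $x\mapsto(T_n(x)-T_n(x_0))_n$, bound the upper modulus by splitting the sum at the threshold where $R_n$ exceeds $d(x,y)$, and get the lower modulus from the uniform constant $\delta$ activating more and more coordinates. The only cosmetic difference is that the paper takes $R_n=\sqrt{n}$ and writes out explicit functions $\rho_1,\rho_2$, whereas you invoke the modulus characterization directly; both are fine.
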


\begin{rem}
The converse holds too, see \cite[Proposition~2.1]{dg}.
\end{rem}

\begin{proof}
Denote the metric on $M$ by $d$. By the assumption, for every $n\in\en$ there exist a Hilbert space $H_n$, a mapping $T_n:M\to S_{H_n}$ and $s_n>0$ such that $\omega_{T_n}(\sqrt{n})\leq\frac{1}{2^n}$ and $\varphi_{T_n}(s_n)\geq\frac{\delta}{2}$. We may suppose that $s_1<s_2<...$ and that $s_n\to\infty$. Let $s_0=0$.

Choose an arbitrary $x_0\in M$ and define a mapping $T$ from $M$ to the Hilbert space $\left(\sum_{n=1}^\infty H_n\right)_{\ell_2}$ (the $\ell_2$-sum of the spaces $H_n$) by $$T(x)=\left(T_n(x)-T_n(x_0)\right)_{n=1}^\infty$$ 
(the fact that $T(x)\in\left(\sum_{n=1}^\infty H_n\right)_{\ell_2}$ for every $x\in M$ follows from the first estimate below). Let us show that $T$ is a coarse embedding. 

Let $x,y\in M$. Let $N\in\en$ be such that $\sqrt{N-1}\leq d(x,y)<\sqrt{N}$. Then
\begin{align*}
\|T(x)-T(y)\|^2&=\sum_{n=1}^{N-1}\|T_n(x)-T_n(y)\|^2+\sum_{n=N}^\infty\|T_n(x)-T_n(y)\|^2\\
&\leq\sum_{n=1}^{N-1}4+\sum_{n=N}^\infty\left(\omega_{T_n}(\sqrt{n})\right)^2\leq4(N-1)+\sum_{n=N}^\infty\frac{1}{4^n}\\
&\leq4(d(x,y))^2+\frac{1}{3}.
\end{align*}
On the other hand, if $N\in\en$ is such that $s_{N-1}\leq d(x,y)<s_N$, then
$$\|T(x)-T(y)\|^2\geq\sum_{n=1}^{N-1}\|T_n(x)-T_n(y)\|^2\geq\sum_{n=1}^{N-1}\left(\varphi_{T_n}(s_n)\right)^2\geq\left(\frac{\delta}{2}\right)^2(N-1).$$

Now, define functions $\rho_1,\rho_2:[0,\infty)\to[0,\infty)$ by $$\rho_1(t)=\sum_{n=1}^\infty\frac{\delta}{2}\sqrt{n-1}\chi_{[s_{n-1},s_n)}(t)$$ 
(where $\chi_{[s_{n-1},s_n)}$ is a characteristic function of the set $[s_{n-1},s_n)$) and 
$$\rho_2(t)=\sqrt{4t^2+\frac{1}{3}}.$$ 
Then $\rho_1,\rho_2$ are nondecreasing, $\lim_{t\to\infty}\rho_1(t)=\infty$ and for all $x,y\in M$ we have $$\rho_1(d(x,y))\leq\|T(x)-T(y)\|\leq\rho_2(d(x,y)).$$ 
Hence $T$ is a coarse embedding.
\end{proof}

\begin{proof}[Proof of Theorem \ref{main_thm}, (ii)$\Rightarrow$(i)]
Suppose that $X$ uniformly embeds into a Hilbert space. Let us show that $X$ satisfies the hypothesis of Proposition \ref{dg_prop_2.1}. 

By Proposition \ref{red_sphere}, there exists a Hilbert space $H$ and a uniform embedding $T:X\to S_H$. Then $\varphi_T(t)>0$ for every $t>0$ and $\lim_{t\to0}\omega_T(t)=0$. 

Let $a>0$. Define $T_a(x)=T(ax),x\in X$. Then $T_a$ maps $X$ into $S_H$. Since 
$$\varphi_T(\|x-y\|^p)\leq\|T(x)-T(y)\|\leq\omega_T(\|x-y\|^p)$$
for all $x,y\in X,x\neq y$, we have
$$\varphi_T(a^p\|x-y\|^p)\leq\|T_a(x)-T_a(y)\|\leq\omega_T(a^p\|x-y\|^p)$$
for all $x,y\in X,x\neq y$. Hence, for every $t>0$,
$$\varphi_{T_a}(t)\geq\varphi_T(a^pt)$$
and
$$\omega_{T_a}(t)\leq\omega_T(a^pt).$$

Now, let $R>0$ and $\varepsilon>0$. Then 
$$\omega_{T_a}(R)\leq\omega_T(a^pR),$$ 
and since $\omega_T(t)\to0$ as $t\to0$, we may choose $a>0$ so that $\omega_{T_a}(R)\leq\varepsilon$. On the other hand, $$\lim_{t\to\infty}\varphi_{T_a}(t)\geq\lim_{t\to\infty}\varphi_T(a^pt)=\lim_{t\to\infty}\varphi_T(t)$$ 
and the last limit does not depend on $a$ and it is positive since $\varphi_T$ is nondecreasing and $\varphi_T(t)>0$ for every $t>0$. So if we define $\delta=\lim_{t\to\infty}\varphi_T(t)$, we see that the hypothesis of Proposition \ref{dg_prop_2.1} is satisfied, and therefore $X$ coarsely embeds into a Hilbert space.
\end{proof}

\end{document}